\newtheorem{theorem}{Theorem}[section]
\newtheorem{lemma}[theorem]{Lemma}
\theoremstyle{definition}
\newtheorem{remark}[theorem]{Remark}
\numberwithin{equation}{section}
\DeclareMathOperator\Real{Re}
\DeclareMathOperator\Imag{Im}
\renewcommand\Re{\Real}
\renewcommand\Im{\Imag}
\newcommand{\boldN}{{\bf N}}
\newcommand{\boldS}{{\bf S}}
\newcommand{\al}{\alpha}
\newcommand{\la}{\lambda}
\newcommand{\be}{\beta}
\newcommand{\de}{\delta}
\newcommand{\ga}{\gamma}
\newcommand{\si}{\sigma}
\newcommand{\calG}{{\mathcal G}}
\newcommand{\calS}{{\mathcal S}}
\newcommand{\wh}{\widehat}
\newcommand{\wt}{\widetilde}
\newcommand{\dd}{{\rm d}}
\newcommand{\CC}{\mathbb C}
\newcommand{\RR}{\mathbb R}
\newcommand{\ZZ}{\mathbb Z}
\newcommand{\ii}{{\text{\rm i}}}
\newcommand{\ee}{{\text{\rm e}}}
\newcommand{\ov}{\overline}
\newlength{\dhatheight}
\newcommand{\void}[1]{}
\newcounter{counter_a}
\begin{document}

\title[Transfer functions and local spectral uniqueness]{Transfer functions and local spectral uniqueness for Sturm-Liouville operators,\\ canonical systems and strings}

\author[H.~Langer]{Heinz Langer}
\address{Institute  of Analysis and Scientific Computing,\\
Vienna University of Technology,\\
1040 Vienna, Austria}
\email{heinz.langer@tuwien.ac.at}


\dedicatory{Dedicated to Zolt\'an Sasv\'ari on the occasion of his 60th birthday}

\begin{abstract}
It is shown that transfer functions, which play a crucial role in M.\,G.\ Krein's study of inverse spectral problems, are a proper tool
to formulate local spectral uniqueness conditions.
%

\end{abstract}

\subjclass{34A55, 34B20, 34L40, 47A11, 47B32}

\keywords{Inverse problems, Sturm--Liouville operators, canonical systems, strings, local spectral uniqueness, spectral measure, transfer function}

\maketitle{}

\today
\section{Introduction}
The main object of study in this note is a Sturm-Liouville operator $-y''+qy$ on an interval $[0,\ell)$ and with a self-adjoint boundary condition at $x=0$. By the Borg-Mar\v cenko theorem,  a spectral measure of this operator determines the potential and the boundary condition uniquely.
In 1998 B.~Simon proved a local version of the Borg-Mar\v cenko theorem (see \cite[Theorem 1.2]{Si1}): Given two such Sturm-Liouville problems with potentials $q_1$ and $q_2$ on intervals $[0,\ell_1),\, [0,\ell_2)$, he formulates a condition for the coincidence of the potentials on some sub-interval
$[0,a],\,0<a\le\min\{\ell_1,\ell_2\}$ and of the boundary conditions at $0$. B.~Simon's  condition is formulated in terms of the Weyl-Titchmarsh function  of the problem which is the Stieltjes transform of the spectral measure.
In \cite{B}  C.\ Bennewitz  gave a  very short  proof of B.~Simon's result. 

F.~Gesztesy and B.~Simon write in \cite[p. 274]{GS} that  `it took  over 45 years to improve Theorem 2.1' (the Borg-Mar\v cenko theorem) `and derive its  local counterpart'. It is the aim of this note to show that the method and results of M.G.~Krein on 
 direct and inverse spectral problems for  Sturm-Liouville operators in the beginning of the 1950s (see, e.g., \cite{K52}) provide a simple criterion for local uniqueness of the potential in terms of Krein's transfer function. From this criterion B.~Simon's uniqueness result  follows by  a Phragm$\acute{\rm e}$n-Lindel\"of argument. We remark that the amplitude function in \cite{Si1} is essentially the derivative of\, Krein's transfer function;  the class of spectral measures in \cite{K52} and in the present note is larger than that in \cite{GS}.

The connection between the Sturm-Louville operator and its transfer function corresponds to the  connection between a symmetric Jacobi matrix and the Hamburger moment problem; see also Subsection 5.3.

As for Sturm-Liouville problems, transfer functions can be defined for canonical system and strings, see e.g.\ \cite{KL5},  and these can also be used to give criteria for local spectral uniqueness. We formulate some of these results in Section~4~below.

The transfer functions are continuous and have the property, that a certain hermitian kernel is positive definite. This fact yields  integral representations of the transfer functions with respect to measures which are the spectral measures of the differential operator. The problem to determine all spectral measures of a symmetric but not self-adjoint differential operator is therefore closely related to the problem of extending a function, given on some interval and for which a certain kernel is positive definite, to a maximal interval such that this kernel is still positive definite (comp.  \cite{KL5}).

I thank Dr.\ Sabine Boegli from the Institute of Mathematics of the University of Bern for the calculations and plots of the examples, and her as well as V.N.~Pivovarcik, G.~Freiling and V.~Yurko for valuable remarks.

\section{Spectral measures and transfer functions for Sturm-Liouville problems}

\noindent{2.1.} Consider   the Sturm-Liouville problem 
\begin{equation}\label{prob}
-y''(x)+q(x)y(x)-zy(x)=0,\ x\in [0,\ell),\ z\in\CC,\quad  y'(0)-   h\, y(0)=0,
\end{equation}
where $0<\ell\le\infty,\, q\in L^1_{\it loc}([0,\ell)), h\in\RR$; the case $h=\infty$, that is the boundary condition $y(0)=0$,  is not considered in this note. We set $h=\cot\al$ with $0<\al<\pi$, and denote by $\varphi(x;z), \psi(x;z)$ the solutions of the differential equation in \eqref{prob} satisfying the initial conditions
\[
 \varphi(0;z)=\sin\al,\ \varphi'(0;z)=\cos\al,\quad \psi(0;z)=-\cos\al,\ \psi'(0;z)=\sin\al.
\]
Hence $\varphi(\cdot;z)$ is the solution of boundary value problem \eqref{prob}.

We  recall the definition of a  spectral measure of the problem \eqref{prob}. Denote by $\mathcal L_0$  the set of all functions $f\in L^2(0,\ell)$ which vanish identically near $\ell$.
The {\it Fourier transformation} $\mathcal F$ of the problem \eqref{prob} is given by
 \[
\mathcal F(y;\la):=\int_0^\ell y(x)\varphi(x;\la)\dd x,\quad \la\in\RR,\ y\in\mathcal L_0.
 \] 
Clearly, $\mathcal F(y;\cdot)$ is a holomorphic function on $\RR$. The measure $\tau$ on $\RR$ is called a {\it spectral measure of the problem} \eqref{prob} if $\mathcal F$ is an isometry from $\mathcal L_0\subset L^2(0,\ell)$ into the Hilbert space $L^2_\tau(\RR)$, that is, if the Parseval relation
\[
\int_0^\ell|y(x)|^2\dd x=\int_\mathbb R|\mathcal F(y;\la)|^2\dd \tau(\la),\quad y\in\mathcal L_0,
\]
holds.
 In this case the mapping $y\mapsto \mathcal F(y;\cdot)$ can be extended  by continuity to all of $L^2(0,\ell)$. The range of this extension is either the whole space $L^2_\tau(\RR)$ or a proper subspace of $L^2_\tau(\RR)$; correspondingly, $\tau$ is called an {\it orthogonal} or a {\it non-orthogonal spectral measure} of the problem \eqref{prob}.  
 The set of all spectral measures of the problem \eqref{prob} is denoted by $\calS$, the set of all orthogonal spectral measures by $\calS^{\rm orth}$. It is well-known that $\calS$ contains exactly one element if the problem \eqref{prob} is singular and in limit point case at $\ell$;  this spectral measure is orthogonal. Otherwise, if \eqref{prob} is regular at $\ell$, or singular and in limit circle case at $\ell$, $\calS$ contains infinitely many orthogonal and infinitely many non-orthogonal spectral measures. For the case of a regular right endpoint a description of all the spectral measures was given, e.g.,  in \cite{KK1}, see also \eqref{bloch} below. 

If $0<a<\ell$ we consider the restriction of problem \eqref{prob} to $[0,a]$. This means that  $\ell$ is replaced by $a$, the potential of the restricted problem is the restriction  $q\big|_{[0,a]}$ of $q$, and $h  $ or $\al$ is the same as in \eqref{prob}. This is a regular problem, the set of all its spectral measures  is denoted by $\calS_a$. 
 It follows immediately from the definition of a spectral measure, that a spectral measure of the problem \eqref{prob} on the interval $[0,\ell)$ is also a spectral measure of the restricted problem on $[0,a]$.

Recall that a complex function $F$ is a {\it Nevanlinna function}, if it is holomorphic in $\CC^+\cup\CC^-$ and has the properties
\[
F(\ov z)=\ov{F(z)},\quad \Im F(z)\ge 0\  \text{ for } z\in\CC^+;
\]
the class of all Nevanlinna functions is denoted by $\boldN$, and we set  $\wt\boldN:=\boldN\cup\{\infty\}$. It is well known that $F\in\boldN$ if and only if $F$ admits a representation
\begin{equation}\label{nev}
F(z)=\al + \be z+\int_\RR\left(\dfrac 1{\la - z}-\dfrac\la{1+\la^2}\right)\dd \si(\la),\quad z\in\CC^+\cup\CC^-,
\end{equation}
where $\al\in\RR,\,\be\ge 0$, and $\si$ is a measure on $\RR$ with the property $\int_\RR\frac{\dd\si(\la)}{1+\la^2}<\infty$, called the {\it spectral measure of} $F$.

A description of the set $\calS_a$ of all spectral measures of a regular problem \eqref{prob} on $[0,a]$ can be obtained from the following result (see  \cite[Theorem 14.1]{KK1}).

\begin{itemize} 
\item[${2.1^{\rm o}}\!\!.$] 
  {\it If $\,0<a\le \ell$ and the problem \eqref{prob} is regular on $[0,a]$, then for $\ga\in\wt\boldN$ the function
\begin{equation}\label{res_for}
m_\ga(z):=\dfrac{\psi'(a;z)-\psi(a;z)\ga(z)}{\varphi'(a;z)-\varphi(a;z)\ga(z)},\quad z\in\CC\setminus\RR,
\end{equation}
is a Nevanlinna function: $m_\ga\in\boldN$. If $\tau_\ga$ denotes the spectral measure of $m_\ga$ then $\calS_a=\{\tau_\ga:\ga\in\wt\boldN\}$.  The spectral measure $\tau_\ga$ is orthogonal if and only if $\ga$ is a real constant or $\infty$.}
\end{itemize}

The function $m_\ga$ in \eqref{res_for} is the {\it Weyl--Titchmarsh function}  corresponding to the following (possibly $z$-depending) boundary condition at $x=a$:
\begin{equation}\label{bcc}
y'(a)-\ga(z)y(a)=0.
\end{equation}
In fact, the solution $y$ of the inhomogeneous problem
\begin{equation}\label{hc}
-y''+qy-zy=f \text{ on } [0,a],\quad
 y'(0)-   h\, y(0)=0,\ \ y'(a)-\ga(z)y(a)=0,
\end{equation}
can be written as
$y(x)=\int_0^aG(x,\xi;z)f(\xi)\dd\xi,\,0\le x\le a$, where
\[
G(x,\xi;z):=
\left\{\begin{array}{ll}\varphi(x;z)\big(m_\ga(z)\varphi(\xi;z)-\psi(\xi;z)\big),\quad& 0\le x\le\xi\le a,\\[1mm]
\varphi(\xi;z)\big(m_\ga(z)\varphi(x;z)-\psi(\xi;z)\big),\quad& 0\le \xi\le x\le a.
\end{array}\right.
\]
For the Weyl--Titchmarsh function $m_\ga\in\boldN$ and the corresponding spectral measure $\tau_\ga$ the relation \eqref{nev} specializes to
\[
 m_\ga(z)=-\cot\al+\int_\mathbb R\dfrac{\dd\tau_\ga(\la)}{\la - z}.
\] 
Combining this relation with \eqref{res_for} it follows that the set of all spectral measures of the problem\eqref{prob} is given through a fractional linear transformation with parameter $\ga\in\wt\boldN$:
\begin{equation}\label{bloch}
\int_\mathbb R\dfrac{\dd\tau_\ga(\la)}{\la - z}=\cot\al +\dfrac{\psi'(a;z)-\psi(a;z)\ga(z)}{\varphi'(a;z)-\varphi(a;z)\ga(z)},\quad z\in\CC\setminus\RR.
\end{equation}

If $\ga$ in \eqref{hc} is a real constant or $\infty$ with the problem \eqref{hc} there is defined a self-adjoint operator in the space $L^2(0,a)$, which we denote by $A_\ga$. Then, at least formally, with the delta-distribution $\delta_0$ the Parseval relation  implies
\begin{equation}\nonumber
\int_\mathbb R\dfrac{\dd\tau_\ga(\la)}{\la-z}=\dfrac 1{(\sin\al)^2}\big((A_\ga-z)^{-1}\delta_0,\delta_0\big)_{L^2(0,a)}.
\end{equation}

\medskip

\noindent{2.2.} In \cite{K52} with the spectral measure $\tau\!\in\! \calS$, besides the Weyl-Titchmarsh function $m_\tau$,  M.G.~Krein associates the {\it transfer function} $\Phi_\tau$ of the problem \eqref{prob} (see also \cite{M}):
\begin{equation}\label{trans}
\Phi_\tau(t):=\int_\RR\dfrac{1-\cos(\sqrt{\la}t)}{\la}\,\dd\tau(\la),\quad t\in [0,2\ell).
 \end{equation}
The integral in \eqref{trans} exists at least for $t\in [0,2\ell)$ (a proof will be given in Subsection 5.1), and the function $\Phi_\tau$ has an absolutely continuous second derivative there. 
Since for $a \in (0,\ell)$ a spectral measure of the problem \eqref{prob} is also a spectral measure of the restricted problem on $[0,a]$,  the restriction to $[0,2a]$ of a transfer function of   \eqref{prob}  is also a transfer function of the restricted problem on $[0,a]$. 

The expression on the right hand side of \eqref{trans} defines an extension of $\Phi_\tau$ to the interval $(-2\ell,2\ell)$ by symmetry: $\Phi_\tau(-t)=\Phi_\tau(t), \, t\in[0,2\ell)$, and possibly also to  an interval larger than $(-2\ell,2\ell)$. If, e.g., the support of $\tau$ is bounded from below, then $\Phi_\tau$ is defined by the integral in \eqref{trans} on $\RR$ and it is at most of exponential growth at $\infty$:
\[
|\Phi_\tau(t)|\le Ce^{\kappa t},\quad t\in \RR,
\]
for some $C,\,\kappa>0$.
 In this case,  for $z\in\CC^+$ with sufficiently large imaginary part we have
\begin{equation}\label{twt}
\int_0^\infty e^{\ii zt}\Phi_\tau(t)\dd t=\frac\ii z\int_\RR\frac{\dd\tau(\la)}{\la-z^2}=\dfrac\ii zm_\tau(z^2).
\end{equation}
If $\tau$ is an orthogonal spectral measure of a regular problem \eqref{prob} then the support of $\tau$ is bounded from below (see \cite[Satz 13.13]{W}) and \eqref{twt} holds.

The following properties of the transfer functions $\Phi_\tau$  of the problem  \eqref{prob} were formulated in \cite[Theorems 2 and 3]{K52}. 
\begin{itemize} {\it
\item[${2.2^{\rm o}}\!\!.$]    For $0\le t<2\ell$, the values $\Phi_\tau(t)$ do not depend on $\tau\in \calS_\ell$.\vspace*{3mm}
\item[${2.3^{\rm o}}\!\!.$]   The set of all spectral measures $\tau\in \calS_\ell$ coincides with the set of all measures $\tau$ for which a representation \eqref{trans} of the transfer function $\Phi_\tau$ holds on $[0,2\ell)$.
} 
\end{itemize}

Proofs of claim ${2.2^{\rm o}}\!\!$, and of claim ${2.3^{\rm o}}$ for orthogonal spectral measures will be given in Subsection 5.1 and 5.2 below.

The following fact is a crucial property of the transfer function of a Sturm--Liouville problem. It was obtained as an example for the method of directing functionals in \cite{Knuclei}, and is quoted in  \cite{K52} (see also  Subsection 5.3).

\begin{itemize} {\it
\item[${2.4^{\rm o}}\!\!.$]  A continuous functions $\Phi$ on $[0,2\ell)$ with $\Phi(0)=0$ admits  a representation \eqref{trans} with some measure $\tau$ on $\RR$:
\begin{equation}\label{fer}
\Phi(t)=\int_\RR\dfrac{1-\cos(\sqrt{\la}t)}{\la}\,\dd\tau(\la),\quad t\in [0,2\ell),
\end{equation}
if and only if the kernel 
\begin{equation}
K_\Phi(s,t):=\Phi(t+s)-\Phi(|t-s|),\quad 0\le s,t<\ell,
\end{equation}
is positive definite.  
}
\end{itemize}

 If the integral in \eqref{fer}  exists also  for $t\in[2\ell,2\wt \ell)$ with some $\wt \ell>\ell$, then the expression on the right hand side of \eqref{fer} defines a continuous continuation $\wt \Phi$ of $\Phi$ to the larger interval $[0,2\wt \ell)$ such that the kernel $K_{\wt\Phi}$ is positive definite on $[0,\wt \ell)$.

Statement ${2.3^{\rm o}}$ implies the following localization principle; here we write $(2.1_{\!j})$ for the problem $\eqref{prob}$ with parameters $\ell_j, q_j, h_j,\,j=1,2$.

\begin{theorem}\label{p21}
Suppose we are given two problems $(2.1_j)$ with corresponding transfer functions $\Phi_j$ on the intervals $[0,2\ell_j), \,j=1,2$. If, for some $a$ with $0<a\le\min\{\ell_1,\ell_2\}$,
\begin{equation}\label{fifi}
\Phi_1(t)=\Phi_2(t),\quad t\in[0,2a),
\end{equation}
then $q_1=q_2$ a.e. on $[0,a)$ and $h_1=h_2$.   
\end{theorem}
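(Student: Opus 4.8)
The plan is to pass to the restricted problems on $[0,a]$, use the intrinsic description $2.3^{\rm o}$ of their classes of spectral measures to conclude that these classes coincide, and then invoke the classical Borg--Mar\v cenko uniqueness theorem in its version for \emph{regular} Sturm--Liouville problems.

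First I would use the restriction property recorded after \eqref{trans}: for $a<\ell_j$ the restriction of $\Phi_j$ to $[0,2a)$ is the transfer function of the \emph{regular} problem obtained from $(2.1_j)$ by replacing $\ell_j$ with $a$ and keeping $q_j\big|_{[0,a]}$ and $h_j$. It is enough to treat $a<\min\{\ell_1,\ell_2\}$; the endpoint case then follows by letting $a'\uparrow a$ through values $a'<\min\{\ell_1,\ell_2\}$, since equality of the potentials a.e.\ on $[0,a')$ for every such $a'$ yields equality a.e.\ on $[0,a)$. So from now on both problems are regular on $[0,a]$, and by \eqref{fifi} we write $\Phi:=\Phi_1=\Phi_2$ on $[0,2a)$; by $2.4^{\rm o}$ the kernel $K_\Phi$ is positive definite on $[0,a)$.

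Next I would apply $2.3^{\rm o}$ to each restricted problem separately. For the restriction of $(2.1_j)$ to $[0,a]$, statement $2.3^{\rm o}$ identifies its class of spectral measures $\calS_a^{(j)}$ with the set of \emph{all} measures $\tau$ on $\RR$ for which the representation \eqref{trans} of $\Phi$ holds on $[0,2a)$ --- a set that refers only to $\Phi$, not to $q_j$ or $h_j$. Hence $\calS_a^{(1)}=\calS_a^{(2)}$: the two regular problems on $[0,a]$ have one and the same class of spectral measures; in particular, by $2.1^{\rm o}$, they have the same orthogonal spectral measures.

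Finally I would invoke classical inverse spectral theory, to the effect that a regular Sturm--Liouville problem on $[0,a]$ is determined, up to the stated a.e.\ identification of potentials, by its class of spectral measures. Concretely one selects an orthogonal spectral measure $\tau$ in the common class --- by $2.1^{\rm o}$ it corresponds to a $z$-independent boundary condition at $x=a$ --- recovers the boundary parameter at $x=0$ from the high-energy asymptotics of $\tau$ (for a regular problem $\tau([0,\la])=\tfrac{2}{\pi\sin^2\al}\sqrt\la+o(\sqrt\la)$ as $\la\to+\infty$, which fixes $\sin^2\al$, while the next asymptotic term fixes $h=\cot\al$ itself), and then, $\al$ being known, reads off the Weyl--Titchmarsh function \eqref{res_for} for that boundary condition at $x=a$, to which the Borg--Mar\v cenko theorem for the regular problem applies and yields $q_1=q_2$ a.e.\ on $[0,a)$; together with $\al_1=\al_2$ this gives $h_1=h_2$. (One could instead take the two orthogonal ``Dirichlet'' and ``Neumann'' spectral measures at $x=a$ from the common class and invoke Borg's two-spectra theorem; or, bypassing $2.3^{\rm o}$ altogether, appeal directly to Krein's inverse theorem that $\Phi\big|_{[0,2a)}$ determines $q\big|_{[0,a)}$ and $h$.) The step I expect to be the main obstacle is precisely this last one: turning ``same class of spectral measures'' into the input of a genuine uniqueness theorem requires care, because the natural $\wt\boldN$-parametrisations $\ga\mapsto\tau_\ga$ of the two classes in $2.1^{\rm o}$ need not agree, so one should argue with an intrinsically distinguished member of the class (or with the whole family of Weyl $m$-functions, equivalently the $z$-dependent Weyl disks, which coincide once the real additive constants $-\cot\al_j$ are reinstated), and one must use more than the leading high-energy asymptotics of $\tau$ in order to pin down $h$, including its sign.
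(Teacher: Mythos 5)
Your proposal follows essentially the same route as the paper: restrict both problems to $[0,a]$, use $2.3^{\rm o}$ to identify the two classes of spectral measures as the same $\Phi$-determined set, and conclude by the Borg--Mar\v cenko theorem. Your additional care at the endpoint $a=\min\{\ell_1,\ell_2\}$ and your elaboration of how the common class of spectral measures feeds into the uniqueness theorem are sound refinements of the paper's (terser) argument, not a different method.
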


\begin{proof}
For short we write $(2.1_j^a)$ for the problem \eqref{prob} with parameters 
\[
 a, q_j\big|_{[0,a]}, h_j,\quad j=1,2.
\]
Then the restriction of $\Phi_1$ to $[0,2a)$ is the transfer function of problem $(2.1_1^a)$ and, since $\Phi_1(t)=\Phi_2(t)$ on $[0,2a)$, also the transfer function  of  problem $(2.1_2^a)$. Thus, by ${2.3^{\rm o}}$, the sets of spectral measures of the problems  $(2.1_1^a)$ and  $(2.1_2^a)$ coincide, and  the claim follows from the Borg--Mar\v cenko theorem (see, e.g. \cite[last paragraph]{K52}).
\end{proof}

For constant $\ga$, the value $\Phi_{\tau_\ga}(t)$ of the transfer function  has the following physical meaning  (see \cite{K52}). On the interval $[0,\ell)$ of the $x$-axis, consider a homogeneous string with mass density one, with an elastic foundation given by $q$, and the boundary conditions $y'(0)-hy(0)=0,\,h\ne\infty$, and  \eqref{bcc}. If the constant force $1$ starts to act at time $t=0$ perpendicularly to this string at the left endpoint $0$, then  $\Phi_{\tau_\ga}(t)$  is the position of the left endpoint  at time~$t$.

\begin{remark} In \cite{K52} the statements
${2.2^{\rm o}}$ and ${2.3^{\rm o}}$ are formulated for the more general problem
\begin{equation}\nonumber
-y''(x)+q(x)y(x)-z\rho(x)y(x)=0,\ x\in [0,\ell),\quad  y'(0)-   h y(0)=0,
\end{equation}
with a weight function $\rho\in L^1_{\rm loc}([0,\ell)),ß,\rho\ge 0$, which does not vanish on any sub-interval of $[0,\ell)$ of positive length. In this case, for $0<x\le\ell$  set
\[
a_x:=\int_0^x\sqrt{\rho(\xi)}\dd \xi.
\]
 Then the transfer functions from \eqref{trans} are defined at least  on $[0,2a_\ell)$, and if $\widehat\ell\in (0,\ell)$ for the corresponding restricted problem to $[0,\widehat\ell)$ the  transfer functions $\Phi_{\widehat\tau}$  coincide on the interval $[0,2a_{\widehat\ell}]$. 
\end{remark}

We conclude this subsection with plots of some transfer functions for two examples of Sturm-Liouville operators.\\[2mm]
Example 1. Consider the simplest problem
\[
-y''-zy=0,\quad x\in[0,\ell],\ y'(0)=0,\ y'(\ell)-\ga y(\ell)=0,
\]
with some $\ga\in\mathbb R\cup\{\infty\}$. The corresponding Weyl-Titchmarsh function is
\[
m_\ga(z)=\dfrac{-\cos(\sqrt{z}\ell)+\ga\dfrac{\sin(\sqrt{z}\ell)}{\sqrt{z}}}{\sqrt{z}\sin(\sqrt{z}\ell)+\ga\cos(\sqrt{z}\ell)};
\]
here we write $m_\ga$ instead of $m_{\tau_\ga}$ and, correspondingly, $\Phi_\ga$ instead of $\Phi_{\tau_\ga}$.

\begin{figure}[h]
\includegraphics[scale=0.75]{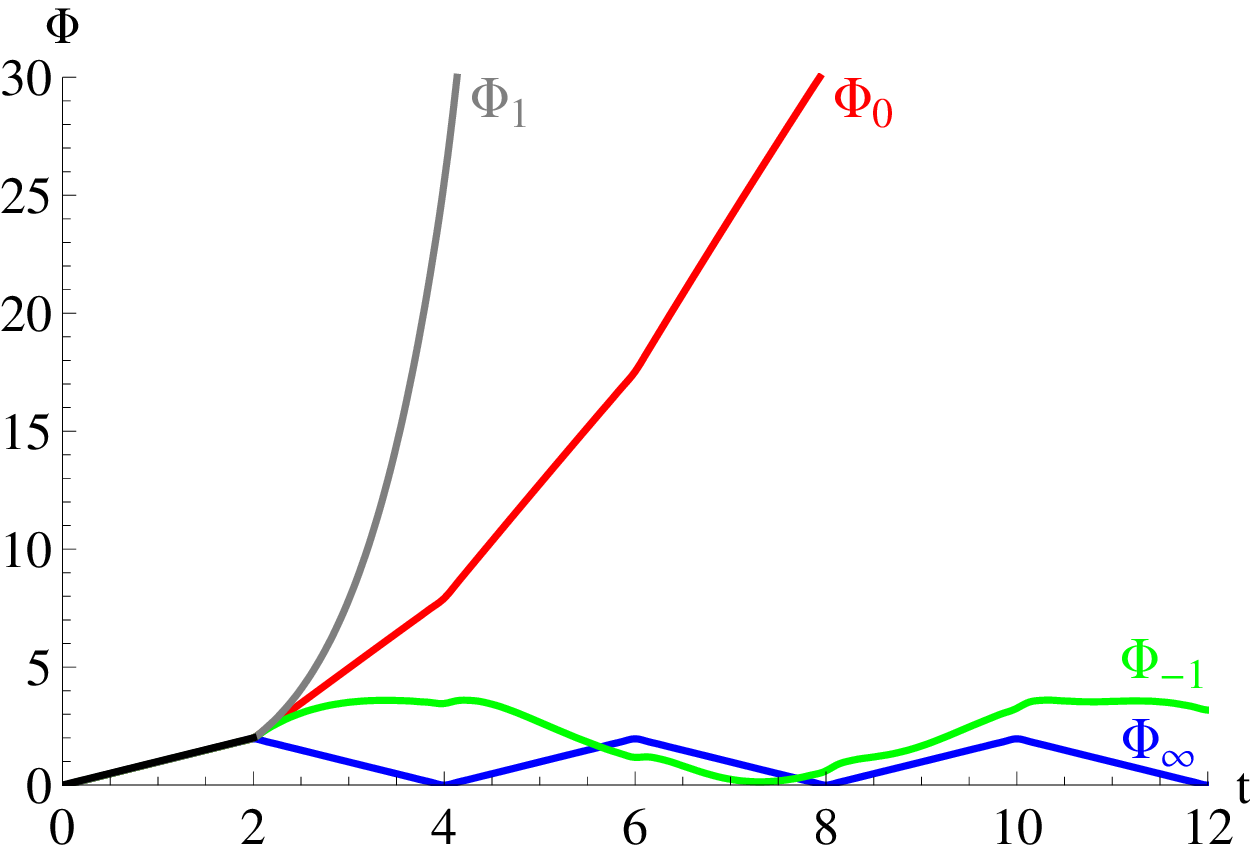}
\label{fig0}
\caption{}
\end{figure}

In Fig.\ 1  some transfer functions $\Phi_\ga$ are shown for $\ell=1$. They all coincide on $[0,2]$. The periodic function $\Phi_\infty$ corresponds to the Dirichlet, the function $\Phi_0$ to the Neumann boundary condition at $x=1$.\\[2mm]
Example 2. Consider the Bessel type problem
\begin{equation}\label{besss}
-y''(x)+\dfrac{2y(x)}{(x-1)^2}=zy(x),\quad y'(0)=0.
\end{equation}
On the interval $[0,1)$ it is singular and limit point at $x=1$, and we denote the corresponding transfer function by $\Phi_s$. We also consider \eqref{besss} on the interval $[0,\frac 12]$ and with a boundary condition $y'(\frac 12)-\ga y(\frac 12)=0$ at $x=\frac 12$; the corresponding transfer function is denoted by $\Phi_\ga$.

The fundamental system $\varphi,\,\psi$ of solutions of the differential equation in \eqref{besss} satisfying
$
\varphi(0,z)=1,\ \varphi'(0,z)=0,\
\psi(0,z)=0,\ \psi'(0,z)=1
$
 is 
\begin{align*}
\varphi(x,z)&=\frac 1{z}\left\{\left(\frac{ 1-zx}{x-1}\right)\dfrac{\sin (x\sqrt{z})}{\sqrt{z}}+\left(-\frac {x}{x-1}+z\right)\cos(x\sqrt{z})\right\},\\
\psi(x,z)&=\frac 1{z}\left\{\left(z-\frac {1}{x-1}\right)\dfrac{\sin (x\sqrt z)}{\sqrt{z}}+\left(\frac {x}{x-1}\right)\cos(x\sqrt{z})\right\};
\end{align*}
e.g. the Weyl-Titchmarsh function $m_s$ of the singular problem on $[0,1)$ becomes
\[
m_s(z)=\lim_{x\uparrow 1}\dfrac{\psi(x,z)}{\varphi(x,z)}=\dfrac{\sqrt z-\tan \sqrt z}{(1-z)\tan\sqrt z-\sqrt z}.
\]
Fig.\ \ref{fig2} shows the transfer functions $\Phi_s,\,\Phi_0,\Phi_\infty $.
\begin{figure}[h]
\includegraphics[scale=0.75]{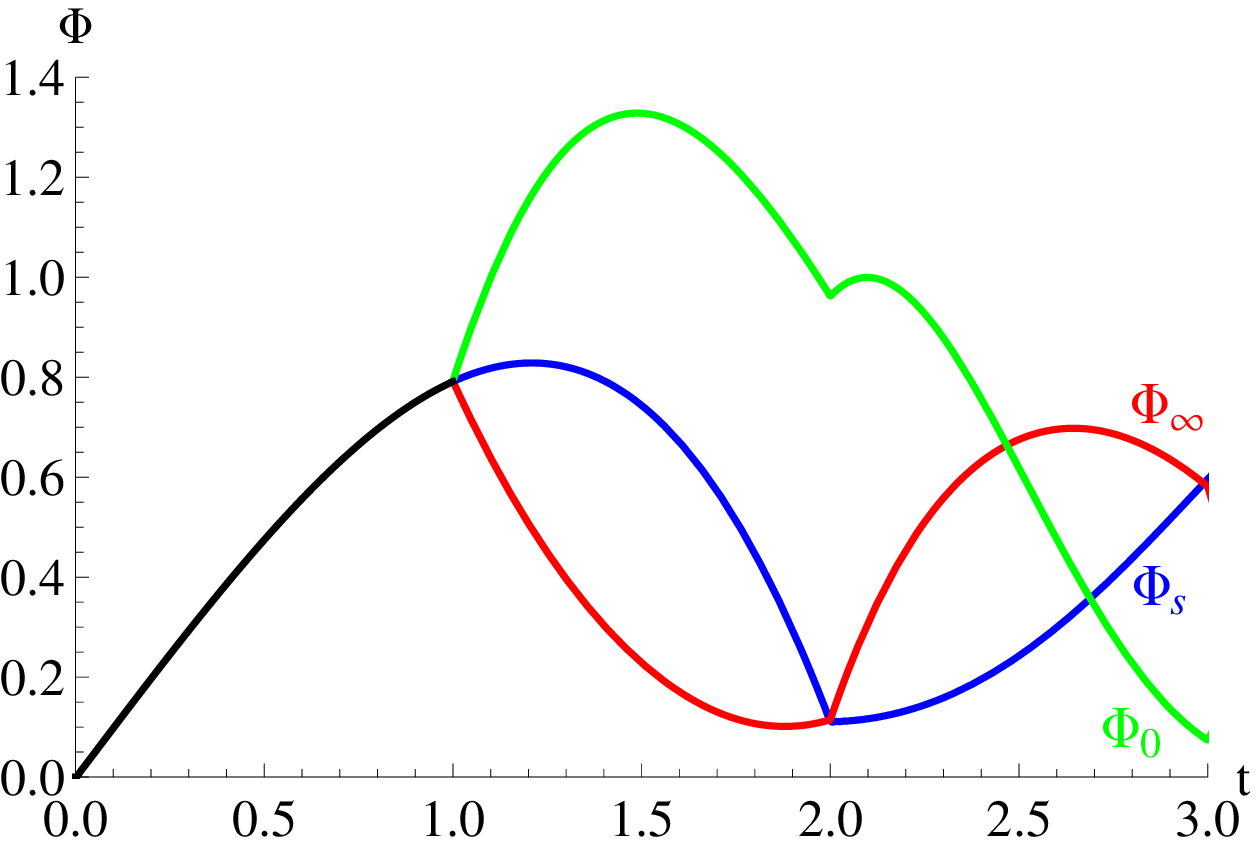}
\label{fig2}
\caption{}
\end{figure}

\medskip

\section{Localization by means of Weyl-Titchmarsh functions}

To obtain B. Simon's result it remains to formulate the condition \eqref{fifi} in terms of the corresponding Weyl-Titchmarsh functions $m_1$ and $m_2$. To do this we need a lemma and some well-known facts.

\medskip

\begin{lemma}\label{PHL}
Let $f\in L^1_{\rm loc}([0,\infty))$ be such that $|f(t)|={\rm O}\big(e^{y_0t}\big),\ t\to\infty$, for some $y_0\in \RR$. If, for some $\be>0$,
\begin{equation}\label{ray}
\int_0^\infty\!\!\! e^{\ii zt}f(t)\dd t={\rm O}\big(e^{-\be\Im z}\big),\ z\!\to\!\infty\ \text{ along some ray  }\ 0<{\rm arg}z<\pi/2,
\end{equation}
then $f(t)=0$ a.e. on $[0,\be]$.
\end{lemma}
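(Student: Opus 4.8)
The plan is to split off the entire part of the Fourier--Laplace transform $F(z):=\int_0^\infty e^{\ii zt}f(t)\,\dd t$ and to show, by two applications of the Phragm\'en--Lindel\"of principle followed by Liouville's theorem, that this entire part vanishes identically; that is exactly the asserted conclusion. By the growth bound on $f$, the integral defining $F$ converges absolutely and $F$ is holomorphic for $\Im z>y_0$. Write $F=F_1+F_2$ with $F_1(z):=\int_0^\beta e^{\ii zt}f(t)\,\dd t$ and $F_2(z):=\int_\beta^\infty e^{\ii zt}f(t)\,\dd t$. Since $f\in L^1(0,\beta)$ the function $F_1$ is entire, and the substitution $t=\beta+s$ gives $e^{-\ii\beta z}F_2(z)=\int_0^\infty e^{\ii zs}f(\beta+s)\,\dd s$, which is holomorphic for $\Im z>y_0$ and satisfies $|e^{-\ii\beta z}F_2(z)|={\rm O}\big(1/\Im z\big)$ as $\Im z\to\infty$. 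The object to study is the entire function $A(z):=e^{-\ii\beta z}F_1(z)$, and the goal is to prove $A\equiv 0$.

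First I would record the elementary bounds on $A$. Because $|e^{-\ii\beta z}|=e^{\beta\Im z}$ and $|F_1(z)|\le\int_0^\beta e^{-t\Im z}|f(t)|\,\dd t$, one gets $|A(z)|\le\|f\|_{L^1(0,\beta)}$ whenever $\Im z\le 0$, and in general $|A(z)|\le\|f\|_{L^1(0,\beta)}\,e^{\beta|z|}$, so $A$ is of exponential type (order one); also $|A(x)|=|F_1(x)|\le\|f\|_{L^1(0,\beta)}$ for $x\in\RR$. Along the ray $\arg z=\theta$ supplied by the hypothesis (with $0<\theta<\pi/2$), condition \eqref{ray} says $e^{\beta\Im z}|F(z)|=|e^{-\ii\beta z}F(z)|={\rm O}(1)$, while $|e^{-\ii\beta z}F_2(z)|\to 0$ there; hence $|A(z)|={\rm O}(1)$ along this ray, and since $A$ is continuous it is bounded on the whole ray.

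Now the ray $\arg z=\theta$ divides $\CC^+$ into two sectors of openings $\theta$ and $\pi-\theta$, each strictly less than $\pi$; on the boundary of either sector --- the ray together with a half-line of $\RR$ --- $A$ is bounded by one common constant $M$, and inside it is of exponential type, so (each opening being $<\pi$) the Phragm\'en--Lindel\"of principle yields $|A|\le M$ on both sectors. Since $A$ is already bounded by $\|f\|_{L^1(0,\beta)}\le M$ on the closed lower half-plane, $A$ is bounded on all of $\CC$, hence constant by Liouville's theorem; choosing $z=\ii y$ and letting $y\to-\infty$, dominated convergence gives $A(\ii y)\to 0$, so that constant is $0$. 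Therefore $F_1\equiv 0$, i.e.\ $\int_0^\beta e^{\ii xt}f(t)\,\dd t=0$ for every $x\in\RR$, and injectivity of the Fourier transform on $L^1(\RR)$ forces $f=0$ a.e.\ on $[0,\beta]$.

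None of this is deep, but the one point that needs care is to apply Phragm\'en--Lindel\"of only in sectors of opening less than $\pi$: a function of exponential type that is merely bounded on the boundary of a half-plane need not be constant, so one cannot run the argument directly with $e^{-\ii\beta z}F(z)$ on $\CC^+$. Peeling off the entire factor $A=e^{-\ii\beta z}F_1$ is what makes this work, since $A$ is bounded on the closed lower half-plane for free and the ray hypothesis then only has to control $A$ on one further line, leaving two sectors each of opening $<\pi$.
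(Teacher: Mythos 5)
Your proof is correct and is essentially the paper's own argument: both study the entire function $e^{-\ii\be z}\int_0^\be e^{\ii zt}f(t)\,\dd t$, bound it on the closed lower half-plane and on the given ray, and invoke Phragm\'en--Lindel\"of plus Liouville to conclude it is constant. The only cosmetic differences are that you spell out the sector decomposition of $\CC^+$ explicitly and identify the constant as $0$ via dominated convergence along the negative imaginary axis, where the paper uses the Riemann--Lebesgue lemma.
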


\begin{proof}\hspace*{-1.5mm}\footnote{I thank Professor Vadim Tkachenko for communicating this proof to me.}
Define $F(z):=e^{-\ii \be z}\int_0^\be e^{\ii zt}f(t)\dd t$. It is an entire function of exponential type. With $z=x+\ii y$, the relation
\[
F(z)=e^{-\ii  \be x}\int_0^\be e^{\ii xt}e^{(\be-t)y}f(t)\dd t
\]
shows that $F$ is bounded in the lower half plane $\CC^-,$ the relation
\begin{align*}
e^{-\ii \be z}\int_0^\be e^{\ii zt}f(t)\dd t&=e^{-\ii \be z}\left(\int_0^\infty e^{\ii zt}f(t)\dd t-\int_\be^\infty e^{\ii zt}f(t)\dd t\right)\\
&=e^{-\ii \be z}\int_0^\infty e^{\ii zt}f(t)\dd t- e^{-\ii\be x}\int_\be^\infty e^{\ii xt}e^{-y(t-\be)}f(t)\dd t
\end{align*}
implies that $F$ is bounded on the ray in \eqref{ray}. According to the Phragm$\acute{\rm e}$n--Lindel\"of principle, this yields $F(z)=const$. The Riemann-Lebesgue lemma, applied to $F(z)=\int_0^\be\ee^{-\ii zt}f(\be-t)\,\dd t$, gives $F(z)=0$ and, finally, $f(t)=0$ a.e. on $[0,\be]$ 
\end{proof}

References for the following statement can be found e.g. in \cite{B}.

\begin{itemize} 
\item[${3.1^{\rm o}}\!\!.$] {\it If $0<a\le \ell$ and the problem \eqref{prob} is regular on $[0,a]$, then the  asymptotic relation 
\begin{equation}\nonumber
\varphi(a;z)=\frac 12\left(\sin\al+\frac{\cos\al}{\sqrt{-z}}\right)e^{\,a\sqrt{-z}}\big(1+{\rm o}(1)\big)
\end{equation} 
holds for $z\to\infty$ along any non-real ray; here the square root is the principal root, that is the root with positive real part.}
\end{itemize}

\smallskip

The next claim follows from the integral representation \eqref{nev}.

\begin{itemize}{\it
\item[${3.2^{\rm o}}\!\!.$] 
 For a Nevanlinna function $g$ we have
\[
g(z)={\rm O}\,(|z|),\quad z\to\infty\ \text{ on any non-real ray}.
\]}
\end{itemize}

\medskip

Now we return to the regular problem \eqref{prob} on $[0,a]$. It follows from ${2.1^{\rm o}}$, that for $\ga,\wh\ga\!\in\!\boldN$ and the corresponding Weyl--Titchmarsh functions $m$ and $\wh m$ we have
\begin{equation}\label{diff}
\begin{array}{rcl}
m(z)-\wh m(z)\!\!\!\!&\!\!=\!\!&\!\!\!\!\dfrac{\psi'(a;z)-\psi(a;z)\ga(z)}{\varphi'(a;z)-\varphi(a;z)\ga(z)}-\dfrac{\psi'(a;z)-\psi(a;z){\wh\ga}(z)}{\varphi'(a;z)-\varphi(a;z){\wh\ga}(z)}\\[4mm]
&=&\dfrac{\ga(z)-\wh\ga(z)}{\varphi(a;z)^2\left(\frac{\varphi'(a;z)}{\varphi(a;z)}-\ga(z)\right)\left(\frac{\varphi'(a;z)}{\varphi(a;z)}-{\wh\ga}(z)\right)}.
\end{array}
\end{equation}
Since $\ga,\,\wh\ga$, and  $-\frac{\varphi'(a;\cdot)}{\varphi(a;\cdot)}$  are Nevanlinna functions (comp. \cite[$\S$ 2.4]{KK1}), so are $\left(\!\frac{\varphi'(a;z)}{\varphi(a;z)}-\ga(z)\!\right)^{-1}$ and $\left(\!\frac{\varphi'(a;z)}{\varphi(a;z)}-\wh\ga(z)\!\right)^{-1}$. The relation \eqref{diff} and the statements ${3.1^{\rm o}}$ and ${3.2^{\rm o}}$ imply
 \begin{equation}\label{ee}
m(z)-\wh m(z)={\rm O}\!\left(e^{-2a(1-\varepsilon)\Re\sqrt{-z}}\right),\quad z\to\infty \text{ on any non-real ray},
\end{equation} 
for all $\varepsilon>0$.

Finally, we can prove the following theorem of B. Simon (\cite{Si1}). For short we write $m_j:=m_{\tau_j},\ j=1,2.$ 

\begin{theorem}\label{main}
Consider  two problems $(2.1_j)$ as in Theorem \ref{p21}. Let $a$ be such that $0<a<\min\{\ell_1,\ell_2\}$, and suppose that for a spectral measure $\tau_1$ of the problem $(2.1_1)$ and a spectral measure $\tau_2$ of the problem $(2.1_2)$ we have
\begin{equation}\label{w2}
m_1(z)\!-m_2(z)={\rm O}\!\left(e^{-2a(\!1-\varepsilon)\Re\!\sqrt{-z}}\right),\quad z\!\to\!\infty \text{  on some non-real ray,}
\end{equation}
for all  $\varepsilon>0$.Then $€q_1=q_2$ a.e. on $[0,a]$ and $h_1=h_2$.
\end{theorem}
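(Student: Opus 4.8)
The plan is to connect the Weyl--Titchmarsh condition \eqref{w2} to the transfer function condition \eqref{fifi} of Theorem~\ref{p21}, and then invoke that theorem. First I would pass from the half-line problems $(2.1_j)$ to their restrictions $(2.1_j^a)$ on $[0,a]$, choosing for each $j$ the Weyl--Titchmarsh function $m_j^a$ of $(2.1_j^a)$ associated with the Dirichlet parameter $\ga_j\equiv\infty$ (or any fixed real constant); by $2.1^{\rm o}$ this $m_j^a$ is a Nevanlinna function and, being the spectral measure of a regular problem, has support bounded from below, so \eqref{twt} applies to its transfer function $\Phi_j^a$. The key point is that $m_j$ and $m_j^a$ are both Weyl--Titchmarsh functions of the \emph{same} restricted problem $(2.1_j^a)$ — they differ only in the boundary parameter at $x=a$ — so estimate \eqref{ee} gives
\[
m_j(z)-m_j^a(z)={\rm O}\!\left(e^{-2a(1-\varepsilon)\Re\sqrt{-z}}\right),\quad j=1,2,
\]
on any non-real ray, for all $\varepsilon>0$. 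Combining this with the hypothesis \eqref{w2} yields, on some non-real ray and for all $\varepsilon>0$,
\[
m_1^a(z)-m_2^a(z)={\rm O}\!\left(e^{-2a(1-\varepsilon)\Re\sqrt{-z}}\right).
\]

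Next I would translate this into a statement about transfer functions. Set $g(z):=\Phi_1^a(t)-\Phi_2^a(t)$ as a function of $t$; both $\Phi_j^a$ are defined on all of $\RR$ and of exponential growth, so by \eqref{twt}, for $z\in\CC^+$ with large imaginary part,
\[
\int_0^\infty e^{\ii zt}\bigl(\Phi_1^a(t)-\Phi_2^a(t)\bigr)\dd t=\frac{\ii}{z}\bigl(m_1^a(z^2)-m_2^a(z^2)\bigr).
\]
Substituting $w=z^2$: if $z$ runs to $\infty$ along a ray with small positive argument, then $w$ runs to $\infty$ along a non-real ray, and $\Re\sqrt{-w}=\Re(\ii z)=\Im z$ (up to sign, for the principal branch). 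Hence the right-hand side is ${\rm O}\bigl(|z|^{-1}e^{-2a(1-\varepsilon)\Im z}\bigr)={\rm O}\bigl(e^{-\be\Im z}\bigr)$ along that ray for any $\be<2a$. Now Lemma~\ref{PHL}, applied to $f=\Phi_1^a-\Phi_2^a$ with $y_0=\kappa$ the common exponential rate, gives $\Phi_1^a(t)=\Phi_2^a(t)$ a.e.\ on $[0,\be]$ for every $\be<2a$, hence for all $t\in[0,2a)$; continuity of the transfer functions upgrades this to equality everywhere on $[0,2a)$.

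Finally, the restriction of $\Phi_j$ (the transfer function of the half-line problem $(2.1_j)$ attached to $\tau_j$) to $[0,2a)$ coincides with $\Phi_j^a$ on $[0,2a)$, by the remark in Subsection~2.2 that restricting a spectral measure restricts the transfer function, together with $2.2^{\rm o}$ (the values on $[0,2a)$ do not depend on which spectral measure of the restricted problem is chosen). Therefore $\Phi_1(t)=\Phi_2(t)$ for $t\in[0,2a)$, which is exactly hypothesis \eqref{fifi} of Theorem~\ref{p21}, and that theorem yields $q_1=q_2$ a.e.\ on $[0,a)$ and $h_1=h_2$, as claimed.

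I expect the main obstacle to be the careful bookkeeping in the change of variables $w=z^2$ and the choice of rays: one must verify that a ray $0<\arg z<\pi/4$ maps to a non-real ray on which \eqref{ee} is valid, that the branch of $\sqrt{-w}$ matches so that $\Re\sqrt{-w}$ really equals $\Im z$ (not $-\Im z$), and that the ``for all $\varepsilon>0$'' quantifier correctly produces ``for all $\be<2a$'' so that no information is lost at the endpoint $t=2a$. A secondary subtlety is justifying that $m_j$ and $m_j^a$ are Weyl--Titchmarsh functions of the same regular problem so that \eqref{ee} genuinely applies — here one uses that a spectral measure of $(2.1_j)$ is a spectral measure of $(2.1_j^a)$, so the corresponding $m_j$ arises from \eqref{res_for} with some parameter $\ga_j\in\wt\boldN$, and $m_j^a$ from $\ga_j\equiv\infty$, both Nevanlinna; the growth bound \eqref{ee} then follows from \eqref{diff}, $3.1^{\rm o}$, and $3.2^{\rm o}$ exactly as in the derivation preceding the theorem.
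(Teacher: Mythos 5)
Your proposal is correct and follows essentially the same route as the paper: replace the (possibly $z$-dependent) boundary parameter at $x=a$ by a fixed constant so that \eqref{ee} controls $m_j-\wh m_j$, combine with \eqref{w2}, pass to the transfer functions via \eqref{twt}, apply Lemma~\ref{PHL} and $2.2^{\rm o}$, and conclude with Theorem~\ref{p21}. The only cosmetic difference is your choice of the Dirichlet parameter $\ga=\infty$ where the paper uses $y'(a)=0$ (i.e.\ $\ga=0$); since \eqref{diff} is written for $\ga,\wh\ga\in\boldN$, the real-constant choice you mention as an alternative is the cleaner one.
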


\begin{proof}
The claim follows from Theorem \ref{p21} if we show that \eqref{w2} implies that 
\begin{equation}\label{nn}
\Phi_1(t)=\Phi_2(t),\quad t\in[0,2a].
\end{equation}
To this end, if the boundary condition \eqref{bcc} which corresponds to  the spectral measure $\tau_j$ depends on $z$, we replace this boundary condition at $a$ by a boundary condition where $\ga$ is a real constant, e.g. by $y'(a)=0$. To this problem there corresponds a new spectral measure $\wh\tau_j$, such that between the corresponding Weyl-Titchmarsh functions 
$m_j$ and $\wh{m}_j$ the relation \eqref{ee} holds, $j=1,2$.
Together with \eqref{w2} this implies that
\begin {align*}
\wh m_1(z)-\wh m_2(z)&=\wh m_1(z)-m_1(z) +m_1(z)-m_2(z)+ m_2(z)-\wh m_2(z)\\
&={\rm O}\left(e^{-2a(1-\varepsilon)\Re\sqrt{-z}}\right),\quad z\to\infty \text{  on some non-real ray,}
\end{align*}
for all $\varepsilon >0$. Since the support of $\wh\tau_j$ is bounded from below (see \cite[Satz 13.13]{W}), the corresponding transfer functions $\wh\Phi_1$ and $\wh\Phi_2$ are defined on the whole real axis and are of exponential growth at $\infty$. Therefore \eqref{twt} holds and we find
\begin{align*}
\int_0^\infty e^{\ii zt}\wh\Phi_1(t)\dd t&-\int_0^\infty e^{\ii zt}\wh\Phi_2(t)\dd t=\dfrac\ii z \wh m_1(z^2)-\dfrac\ii z \wh m_2(z^2)\\
&={\rm O}\left(e^{-2a(1-\varepsilon)\Im z}\right),\quad z\to\infty\text{  on some non-real ray,}
\end{align*}
for all $\varepsilon >0$. Lemma \ref{PHL} yields $\wh\Phi_1(t)\!=\wh\Phi_2(t),\ t\!\in\![0,2a]$, and since $\wh\Phi_j(t)\!=\!\Phi_j(t),\ t\!\in\! [0,2a]$, by  ${2.2^{\rm o}}$, the relation \eqref{nn} follows.
\end{proof}

\section{Transfer functions and local spectral uniqueness for canonical systems and strings} 

\noindent{4.1.}\  For $2$-dimensional canonical systems the role of the transfer functions is played by screw functions, see \cite{KL5}. To explain this, we consider the following canonical system with a symmetric boundary condition at $x=0$:
\begin{equation}\label{ks}
-J{\bf y}'(x)=zH(x){\bf y}(x),\ x\in[0,\ell),\ z\in\mathbb C,\quad {\bf y}(0)\in {\rm span} \{(0\ 1)^{\rm t}\}; 
\end{equation}
  here $0<\ell\le\infty$, the {\it Hamiltonian} $H=\left(h_{ij}\right)_{i,j=1}^2$ is supposed to be a real symmetric non-negative measurable $2\times 2$-matrix function on $[0,\ell)$ which is {\it trace normed}, that is  ${\rm tr}\,H(x)\!=\!1,\,x\in[0,\ell)$, a.e., and satisfies the condition $\int_0^xh_{22}(\xi)\dd\xi>0$ if $x>0$. 

The spectral measures for problem \eqref{ks} are defined as follows (see, e.g. \cite{KL5}).  Consider the solution $W(x;z)$ of the matrix differential equation
\begin{equation}\label{21}
\dfrac{\dd W(x;z)}{\dd x}J=zW(x;z)H(x),\quad W(0;z)=I_2,\quad 0\le x<\ell,\ z\in \mathbb C,
\end{equation}
where for $\ell<\infty$ also $x=\ell$ is allowed. Then, if $\ell<\infty$,  for arbitrary $\ga\in\wt\boldN$ the corresponding {\it Weyl-Titchmarsh function}
\begin{equation}\nonumber
W_{\langle\ga\rangle}^{(\ell)}(z):=\dfrac{w_{11}(\ell;z)\ga(z)+w_{12}(\ell;z)}{w_{21}(\ell;z)\ga(z)+w_{22}(\ell;z)}
\end{equation}
belongs to $\boldN$ and the spectral measures of all these functions $W_{\langle\ga\rangle}^{(\ell)},\,\ga\!\in\!\wt\boldN$, are by definition the {\it spectral measures $\tau_\ga$  of the problem} \eqref{ks}. If $\ell=\infty$, this problem has a unique spectral measure namely the spectral measure of the Nevanlinna function
\[
z\mapsto\lim_{x\to\infty}W_{\langle\ga\rangle}^{(x)}(z),\quad z\in\mathbb C^+\cup\mathbb C^-,
\]
which is independent of $\ga\in\wt\boldN$. An equivalent definition of the spectral measures of 
 \eqref{ks} by means of the Fourier transformation can be given, see \cite{K} and also  \cite{KL5}. If $0<l\le \ell$  the set of all spectral measures of the problem \eqref{ks} on $[0,l]$ is denoted by $\calS_l^c$.

For any measure $\tau$ on $\RR$ with 
\begin{equation}\label{inbed}
\int_\RR\frac{\dd\tau(\la)}{1+\la^2}<\infty
\end{equation} 
and numbers $\al,\be\in\RR$ a {\it screw function} $g(t),\,t\in\RR$, is defined by the formula
\begin{equation}\label{al}
 g(t):=\al+\ii\be t+ \int_{-\infty}^\infty \left(e^{\ii\la t}-1-\dfrac{\ii\la t}{1+\la^2}\right)\dfrac{\dd\tau(\la)}{\la^2},\quad t\in\RR.
\end{equation}
It has the characteristic property, that it is continuous and the kernel
\begin{equation}\label{kerG}
G_g(s,t):=g(s-t)-g(s)-\overline{g(t)}+g(0),\quad s,t\in\RR,
\end{equation}
is positive definite. The measure $\tau$ in the representation \eqref{al} is called the {\it spectral measure of} $g$. Evidently, in the representation \eqref{al} we have $\al=g(0)$. In the following we consider only screw functions $g$ with $g(0)=0$; this class is denoted by $\calG$.

If $\tau$ is a spectral measure of the problem \eqref{ks} for any $\be\in\mathbb R$ a corresponding {\it transfer function} $g_\tau$ of  \eqref{ks} is defined as the screw function
\begin{equation}\label{55}
 g_\tau(t):= \ii\be t+\int_{-\infty}^\infty \left(e^{\ii\la t}-1-\dfrac{\ii\la t}{1+\la^2}\right)\dfrac{\dd\tau(\la)}{\la^2},\quad t\in\RR.
\end{equation}
In contrast to the transfer function for a Sturm--Liouville problem, the function $g_\tau\in\calG$ in \eqref{55} is always defined on the whole real axis. According to a basic result of L.\,De~Branges  \cite{DB}, see also \cite{Wi}, each measure $\tau$ on $\RR$ with the property \eqref{inbed} is the spectral measure of a unique canonical system \eqref{ks} on $[0,\infty)$, and hence also every function of the form \eqref{55} is the transfer function of a unique canonical system.

For a canonical system  \eqref{ks} we set
\begin{equation}\label{al1}
a(l):=\int_0^{l}\sqrt{\det H(x)}\dd x,\quad l\in[0,\ell).
\end{equation}
Then the following statements hold.

\medskip  

\begin{itemize} {\it
\item[${4.1^{\rm o}}\!\!.$]   Suppose that $0<l<\ell$ and $a(l)>0$. If $\tau_1,\tau_2\in \calS_{l}^c$ then for any two screw functions $g_{\tau_1},\,g_{\tau_2}$ for the difference of the restrictions $g_{\tau_1}\big|_{[0,2a(l)]}$ and $g_{\tau_2}\big|_{[0,2a(l)]}$ it holds
\begin{equation}\nonumber
 g_{\tau_1}(t)-g_{\tau_2}(t)=\ii\beta t,\quad t\in [0,2a(l)],
\end{equation}
with some $\be\in\mathbb R$.

\item[${4.2^{\,\rm o}}\!\!.$] Suppose that $0<l<\ell$ and $\int_{l-\varepsilon}^{l}\sqrt{\det H(x)}\,\dd x>0$ for all  $\varepsilon>0$. If $\tau\in\calS_l^c$ and $g_\tau$ is a corresponding screw function, then the set of all spectral measures $\calS_{l}^c$  of the canonical system coincides with the set of spectral measures of all the continuations of  $g_\tau\big|_{[0,2a(l)]}$ in the class $\calG$.  }
\end{itemize}

The statement ${4.2^{\,\rm o}}$ follows from \cite[Theorem 5.6]{KL5}. To prove ${4.1^{\rm o}}$, consider $\tau_1\in\calS_l^c$ with a corresponding screw function $g_{\tau_1}$. If $\tau_2\in\calS_l^c$, according to  ${4.2^{\,\rm o}}$ there exists a continuation $\wt g_{\tau_1}\!\in\!\calG$ of $g_1\big|_{[0,2a(l)]}$ with spectral measure~$\tau_2$:
\begin{equation}\nonumber
{\wt g}_{\tau_1}(t)= \ii\wt\be_2 t+\int_{-\infty}^\infty \left(e^{\ii\la t}-1-\dfrac{\ii\la t}{1+\la^2}\right)\dfrac{\dd\tau_2(\la)}{\la^2},\quad t\in\RR,
\end{equation}
$\wt\be_2$ real. On the other hand, with some real $\be_2$,
\[
 g_{\tau_2}(t)= \ii\be_2 t+\int_{-\infty}^\infty \left(e^{\ii\la t}-1-\dfrac{\ii\la t}{1+\la^2}\right)\dfrac{\dd\tau_2(\la)}{\la^2},\quad t\in\RR,
\]
and it follows that
\[
g_{\tau_1}(t)-g_{\tau_2}(t)=\wt{g}_{\tau_1}(t)-g_{\tau_2}(t)=\ii (\wt{\be}_2-\be_2).
\]

\medskip
Here are some transfer functions for two examples of canonical systems.\\[2mm]
Example 3. Consider the  Hamiltonian
\[
H(x)=\begin{pmatrix}\frac 12&0\\0&\frac 12\end{pmatrix},\quad 0\le x\le \ell.
\]
Then $\det H(x)=\frac 14,\ \int_0^\ell\sqrt{\det H(x)}\dd x=\frac\ell 2$,
\[
W(\ell;z)=\begin{pmatrix}\cos\left(\frac\ell 2z\right)&\sin\left(\frac\ell 2z\right)\\[1mm]
-\sin\left(\frac\ell 2z\right)&\cos\left(\frac\ell 2z\right)\end{pmatrix},
\]
and for $\ga\in\RR \cup\{\infty\}$ we obtain the Weyl-Titchmarsh function 
\[
m_\ga(z)=\dfrac{\cos\left(\frac\ell 2z\right)\ga+\sin\left(\frac\ell 2z\right)}{-\sin\left(\frac\ell 2z\right)\ga+\cos\left(\frac\ell 2z\right)}.
\]

We choose $\ell=2$, and suppose first $\ga\in\RR$. Then the eigenvalues are $\la_k=\la_0+k\pi$ with $\la_0\in (-\pi/2,\pi/2]$ and $k\in\ZZ$ and with corresponding spectral measure  $\tau_k=1$. With some real $\be$ the corresponding transfer function becomes
\begin{align*}
g_\ga(t)=&\ii\be t+\int_\RR\left(\ee^{\ii\la t} -1-\frac{\ii\la t}{1+\la^2}\right)\frac {\dd\tau(\la)}{\la^2}\\
=&\ii\be t+\sum_{k\in\ZZ}\left(\ee^{\ii t(\la_0+k\pi)}  - 1-\frac{\ii\la_kt}{1+\la_k^2}\right)\frac 1{\la_k^2}\\
=&\ii\be t+\ee^{\ii t\la_0}\sum_{k\in\ZZ}\frac{\ee^{\ii t k\pi}}{(\la_0+k\pi)^2}\\
&-\sum_{k\in\ZZ}\frac {1}{\la_k^2}-\ii t\sum_{k\in\ZZ}\frac {1}{(1+\la_k^2)\la_k}.
\end{align*}
Setting $x=\frac{t\pi}2$ the Fourier series in the second last line becomes
\[ 
\sum_{k\in\ZZ}\frac{\ee^{\ii  k\pi t}}{(\la_0+k\pi)^2}=\sum_{k\in\ZZ}\frac{\ee^{2\ii kx}}{(\la_0+k\pi)^2}=\sum_{j\in\ZZ}c_j\ee ^{\ii jx},\quad c_j=\left\{\!\!\!
\begin{array}{cl}\frac 1{\left(\la_0+\frac j2\pi\right)^2}\ \  &j \text{ even,}\\
0&j \text{ odd.}
\end{array}\right.
\]
With Mathematica it can be shown that it equals
\[
\left[\ee^{-2\ii \la_0x/\pi}\left(\frac 2\pi\left(-|x|+\frac\ii{\tan \la_0}x\right)+\frac 1{(\sin\la_0)^2}\right)\right]_{(-\pi ,\pi ]},
\]
where for a function $u$ defined on some bounded interval $(\al,\be]$, $[u]_{(\al,\be]}$  denotes the periodic continuation of $u$ to the real axis. Using the relations 
\[
\sum_{k\in\ZZ}\frac {1}{\la_k^2}=\sum_{k\in\ZZ}\frac {1}{(\la_0+k\pi)^2}=\frac 1{(\sin\la_0)^2},
\]
and
\[
\sum_{k\in\ZZ}\frac {1}{(1+\la_k^2)\la_k}=\frac 1{\tan\la_0}-\frac{\sin(2\la_0)}{\cosh 2-\cos(2\la_0)},
\]
we can write
\begin{align*}
g_\ga(t)
=&\ee^{\ii t\la_0}\left[\ee^{-\ii t\la_0}\left(-|t|+\frac{\ii t}{\tan \la_0}+\frac 1{(\sin\la_0)^2}\right)\right]_{(-2,2]}\\
&-\frac 1{(\sin\la_0)^2}+\ii t\left(\be -\frac 1{\tan\la_0}+\frac{\sin(2\la_0)}{\cosh 2-\cos(2\la_0)}\right).
\end{align*}
Hence if $\be$ is chosen as 
\[
\be=-\frac{\sin(2\la_0)}{\cosh 2-\cos(2\la_0)},
\]
then
\[
g_\ga(t)=-|t|,\quad -2\le t\le 2,
\]
independent of  $\ga$.

If $\ga=\infty$ then 
$m_\infty(z)=-\cot\left(\frac\ell 2z\right),\ \la_k=k\pi,\,k\in\ZZ$, 
 and 
\begin{align*}
g_\infty(t)&=\int_\mathbb R\left(e^{\ii\la t}-1-\frac{\ii\la t}{1+\la^2}\right)\dfrac{\dd\tau(\la)}{\la^2}\\
&=\sum_{k=1}^\infty 2(\cos(\la_kt)-1)\frac{\tau_k}{\la_k^2} -\frac{t^2}{2}\tau_0
=\left[\frac{t^2}2-|t|\right]_{[-2,2]}-\frac{t^2}2.
\end{align*}
All these screw functions are piecewise linear. The functions $g_0$ and $g_\infty$ and the real parts of $g_1$ and $g_2$ are plotted in Fig. 3.

\begin{figure}[h]
\includegraphics[scale=0.75]{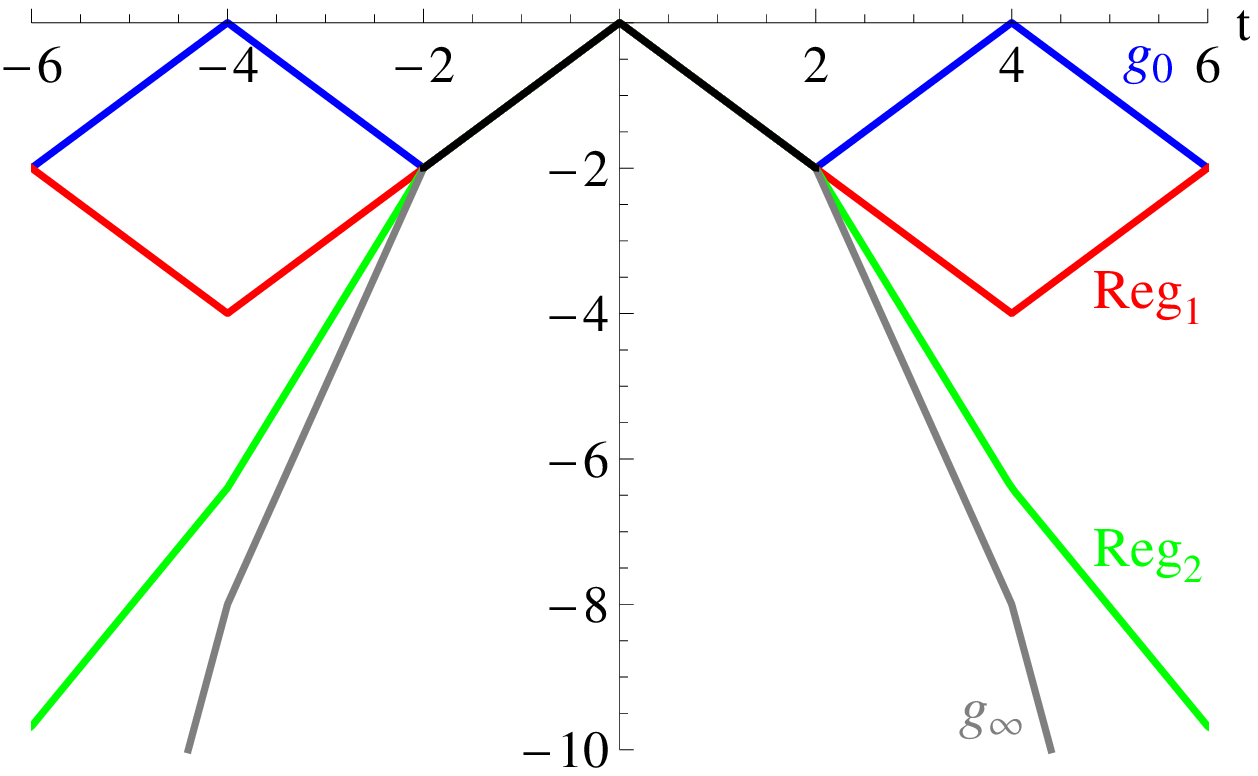}
\label{fig3}
\caption{}
\end{figure}

\medskip

\noindent Example 4. Consider the Hamiltonian
\[
H(x)=\begin{pmatrix}(x-1)^2&0\\0&(x-1)^{-2}\end{pmatrix},\quad 0\le x<1.
\]
It is not trace-normed. With the new  variable $\xi(x):={\rm tr}\,H(x),\ 0\le x<1$, functions $\wt{\bf y}(\xi(x))={\bf y}(x),\ 0\le x<1$, and the new Hamiltonian ${\wt H}(\xi(x)):=H(x),\ 0\le x< 1$,
it becomes the trace-normed system $-J{\bf {\wt y}}'=z{\wt H}{\bf {\wt y}}$ on $[0,\infty)$. Then 
\[
\int_0^{\xi(x)}\sqrt{\det {\wt H}(\xi)}\dd \xi=\int_0^x\sqrt{\det {H}(x)}\dd x=x,\quad 0\le x<1.
\]
The solution $W$ of \eqref{21} is 
\[
W(x;z)=\begin{pmatrix}\dfrac{\sin (xz)\! -\! z\cos (xz)}{z(x\!-\!1)}&\left(\dfrac 1{z^2}\!-\!(x\!-\!1)\right)\sin (xz)\!-\!\dfrac{x\cos (xz)}z\\[2mm]
\dfrac{\sin (xz) }{(x\!-\!1)}&\dfrac{\sin (xz)}z \!-\! (x\!-\!1)\cos (xz)
\end{pmatrix},
\]
and for the Weyl-Titchmarsh function for the singular problem  on $[0,1)$ we find
\[
m(z)=\dfrac{\sin z-z\cos z}{z\sin z}=\dfrac 1z-\cot z.
\]
The eigenvalues are $\la_k=k\pi,\ k=\pm 1,\pm2,\dots$, with spectral weights $\tau_k=1$, and  as a transfer function  we obtain
\begin{align*}
g(t)\!=\!\int_\mathbb R \left(\ee^{\ii\la t}\!-\!1\right)\dfrac{\dd\tau(\la)}{\la^2}
\!=\!\sum_{k=1}^\infty2\left(\cos k\pi t\!-\!1\right)\dfrac1{k^2\pi^2}
\!=\!\left[\dfrac{t^2}2 -t\right]_{(0,2]},\quad t\in\RR.
\end{align*}

\medskip


A  localization principle for the problem \eqref{ks} by means of transfer functions can be formulated as follows; for simplicity we consider canonical systems  on the whole half axis $[0,\infty)$ (see \cite[${ 6.1^{\rm o}}$]{KL5}).

\begin{itemize} {\it
\item[${4.3^{\rm o}}\!\!.$]
 Let the Hamiltonians $H_1,\,H_2$ on  $[0,\infty)$ satisfy the same assumptions as $H$ at the beginning of this section and denote by  $g_1, g_2$  corresponding transfer functions.  Suppose that  $a_1:=\int_0^{\infty}\sqrt{\det H_1(x)}\dd x>0$. If \ $0<a<a_1$ and 
\[
l(a):=\inf\left\{l:\int_0^{l}\sqrt{\det H_1(x)}\,\dd x=a\right\},
\]
 then
\[
 H_1\big|_{[0,l(a)]}= H_2\big|_{[0,l(a)]}, \text{ a.e.},
\]
if and only if 
\begin{equation}\nonumber
g_1(t)-g_2(t)= \ii\be t,\quad t\in [0,2a],
\end{equation}
for some real number $\be$.} 
 \end{itemize}

Observe that here only on  intervals $[0,b]$, such that 
\[
\int_{b-\epsilon}^b\sqrt{\det \,H(x)}\,\dd x>0
\]
for all $\varepsilon >0$, the Hamiltonian $H$ is determined by its transfer functions.
\smallskip

A localization principle for canonical systems by means of their Weyl-Titchmarsh functions was proved in  \cite{LW}.

\medskip

\noindent{4.2.} If, for some $\mu>0$,  the Hamiltonian $H$ in \eqref{ks} is of the form  
\begin{equation}\nonumber
H(x)=\begin{pmatrix}0&0\\0&1\end{pmatrix},\quad 0\le x\le\mu,
\end{equation}
 then the spectral measures of the problems \eqref{ks} are finite (see, e.g., \cite{KL5}). In this case, instead of the screw functions $g_\tau$  the functions 
\begin{equation}\label{49}
f_\tau(t):=\int_{-\infty}^\infty e^{\ii\la t}\dd\tau(\la),\quad t\in\RR,
\end{equation}
 can be introduced. The statements ${4.1^{\rm o}}-{4.3^{\,\rm o}}$ remain true with $g_\tau$ replaced by $f_\tau$ and $\be=0$, see \cite{KL5}. By Bochner's theorem, the characteristic property of a continuous function $f$ to have the  representation \eqref{49} is that the kernel
\begin{equation}\label{efef}
F_f(s,t):= f(s-t),\quad s,t\in\RR,
\end{equation}
is positive definite.\\[2mm]
\noindent Example 5 (comp. \cite{LLS}). A slight alteration of Example~4~is~the~Hamiltonian
\[
H(x)=\left\{\begin{array}{cl}\begin{pmatrix}0&0\\0&1\end{pmatrix},\quad & 0\le x<1,\\[4mm]
\begin{pmatrix}(x-2)^2&0\\0&(x-2)^{-2}\end{pmatrix},\quad & 1\le x\le 2.\end{array}\right.
\]
Then
\[
W(x;z)\!=\!\left\{\!\!\begin{array}{ll}
\begin{pmatrix}1&0\\-zx&1\end{pmatrix},&0\le x<1,\\[2mm]
\begin{pmatrix}1&0\\-z&1\end{pmatrix}\begin{pmatrix}\dfrac{\sin ((x-1)z)\! -\! z\cos ((x-1)z)}{z(x\!-\!2)}&\cdots\\[2mm]
\dfrac{\sin ((x-1)z) }{(x\!-\!2)}&\cdots
\end{pmatrix},&1\le x<2
\end{array}\right.
\]
and the Weyl-Titchmarsh function for the singular problem on $[0,2)$ becomes
\[
m(z)=\dfrac 1{z^2}\tan z-\dfrac 1z,
\]
and the eigenvalues are $\la_k=(2k-1)\frac \pi 2$ with corresponding spectral masses $\tau_k=\dfrac 1{\la_k^2},\ k=\pm 1,\pm 2, \dots$. It follows that
\begin{align*}\nonumber
f(t)=\int_\mathbb Re^{\ii\la t}&\dd\tau(\la)=\sum_{k=1}^\infty\frac{2\cos(\la_kt)}{\la_k^2}\\
&=8\sum_{k=1}^\infty\dfrac{\cos(2k-1)\frac\pi 2t}{(2k-1)^2\pi^2}=[1-|t|]_{[-2,2]},\quad t\in\RR.
\end{align*}

\medskip

\noindent 4.3. Consider again the system in  \eqref{ks}, and suppose that  there exists an $l_0,\ 0<l_0\le\ell$, such that the Hamiltonian $H$ has the property 
\begin{equation}\label{pot}
\det H(x)>0,\quad x\in [0,l_0), \text{  a.e.}
\end{equation}
Then the function $l\mapsto a(l)$ in \eqref{al1} is continuous and strictly increasing on $[0,l_0]$. It is the inverse of the mapping $a\to l(a)$ on $[0,a(l_0)]$. Now ${4.3^{\rm o}}$ implies:

\begin{itemize}
\item[${4.4^{\rm o}}$]{\it
If the Hamiltonian $H$ in \eqref{ks} satisfies  \eqref{pot}, then for each $l\in [0,l_0)$ the values of the Hamiltonian $H$ on $[0,l)$ are $($a.e.$)$ uniquely determined by the values of a corresponding transfer function $g$ on $[0,2a(l)]$.}

\end{itemize}

 The assumption \eqref{pot} is satisfied if the canonical system can be written with a potential $V$:
\begin{equation}\label{pot1}
-J{\bf y}'(x)=z{\bf y}(x)+V(x){\bf y}(x),\ x\in[0,\ell'),\quad {\bf y}(0)\in {\rm span} \{(0\ 1)^{\rm t}\}, 
\end{equation}
where $V$ is a real symmetric  $2\times 2$--matrix function on $[0,\ell')$ which is locally summable there. With the $2\times 2$ matrix function $U$ on $[0,\ell')$, which is the solution of the initial problem
\[
\frac{\dd U}{\dd x}J=U(x)V(x),\quad x\in  [0,\ell'),\quad U(0)=I_2,
\]
we introduce a function ${\bf w}$ by ${\bf y}(x)=U(x){\bf w}(x),\ x\in[0,\ell')$. Since
\[
U(x)JU(x)^*=J,\quad x\in[0,\ell'),
\]
 it follows easily that $\bf w$ satisfies the canonical equation \eqref{ks} with $H(x)=U(x)U(x)^*$. This Hamiltonian is real, continuous, and  ${\rm det}\,H(x)=1,\ x\in[0,\ell')$,  but in general $H$ is not trace normed. However, by a  change of the independent  variable it can be transformed into a trace normed system of the form \eqref{ks} which satisfies the assumption \eqref{pot}. Therefore the conclusions of statement ${4.4^{\rm o}}$ apply to the problem \eqref{pot1}.

\medskip

\begin{remark} Suppose that the transfer function $g$ in \eqref{55} has a continuous {\it  accelerant} $h$ on some interval $[0,2a],\ a>0$. This means by definition,  that  $g$ admits a representation
\[
g(t)=-\eta|t|-\int_0^t(t-s)h(s)\,\dd s,\quad t\in[0,2a],
\]
with some $\eta >0$ and a continuous function $h$ on $[0,2a]$. In particular, $g$ is twice continuously differentiable on $(0,2a)$. Then on $[0,a]$ the corresponding canonical system can be written as a Dirac-Krein system, that is in the form \eqref{pot1} with a continuous potential
\[
V(x)=\begin{pmatrix}\be(x)&-\al(x)\\-\al(x)&\-\be(x)\end{pmatrix},\quad x\in[0,a],
\]
see e.g. \cite{KL4}. According to the above, in this case statement ${4.4^{\rm o}}$ applies.
\end{remark}

\medskip

\noindent{4.4.}\   Recall (see \cite{KK1}) that a {\it string} $S[\ell,M]$ is given by its length~$\ell$, $0<\!\ell\le~\!\!\infty$, and its mass distribution $M$ on $[0,\ell)$, that is, $M(x)$ is the mass of the interval $[0,x],\,0\le x\le\ell$, and we set $M(x)=0$ if $x<0$. Then $M$ is a non-decreasing function on $(-\infty,\ell)$. We always suppose that $M(x)>0$ if $x>0$. The equation 
\begin{equation}\label{81}
\dd y'(x)+z y(x)\,\dd M(x)=0,\quad 0\le x<\ell,\quad z\in\mathbb C,
\end{equation}
 is called the {\it differential equation of the string} $S[\ell,M]$. This string  is called {\it regular} if its length and its total mass are finite: $\ell+M(\ell)<\infty$; otherwise it is called {\it singular}. If the string is regular we assume that $M(\ell-0)=M(\ell)$.

We introduce the solutions $\varphi,\,\psi$
of  equation \eqref{81} that satisfy the initial conditions
\begin{equation*}
\varphi(0;z)=1,\ \varphi'(0-;z)=0;\quad \psi(0,z)=0,\ \psi'(0-;z)=1.
\end{equation*}
That is, $\varphi(x,z),\,\psi(x;z)$ are the solutions of the integral equations
\begin{equation}\nonumber
\begin{array}{rcl}
\varphi(x;z)&=&1+z\displaystyle\int_{0-}^x\,(x-s)\varphi(s;z)\,\dd M(s),\\[3mm]
\psi(x;z)&=&x+z\displaystyle\int_{0-}^x\,(x-s)\psi(s;z)\,\dd M(s).
\end{array}
\end{equation}
%
 The set of all {\it spectral measures} $\tau$ of the regular string $S[\ell,M]$ can be defined by the relation 
\begin{equation}\nonumber
\dfrac{\psi'(\ell;z)\ga(z)+\psi(\ell;z)}{\varphi'(\ell;z)\ga(z)+\varphi(\ell;z)}=\int_0^\infty\,\dfrac{\dd \tau(\la)}{\la-z},
\end{equation}
if $\ga$ runs through the class  $\boldS$ of all {\it Stieltjes functions};  recall that by definition $\ga\in\boldS$ if $\ga$ is holomorphic in $\CC\setminus [0,\infty)$ and $\ga,\wh\ga\in\boldN$, where $\wh\ga(z):=z\ga(z)$.
The {\it transfer function} corresponding to the spectral measure $\tau$ is  the function
\[
 g_\tau(t)=\int_0^\infty\dfrac{\cos\big(\sqrt{\la}\, t\big)-1}{\la}\,\dd\tau(\la),\quad z\in\RR.
\] 
Now  analogs of the statements ${4.1^{\rm o}}$ and ${4.2^{\rm o}}$ hold with $\det H(x)$ replaced by $M'(x)$; here $M'$ denotes the derivative of the absolutely continuous  component of the non-decreasing function $M$. For details the reader is referred to \cite{KL5}. We only formulate the analogue of ${4.3^{\rm o}}$ for  regular strings.
\begin{itemize} {\it
\item[${4.5^{\rm o}}\!\!.$] Let $S[\ell_j,M_j]$ be a regular string such that $a_j:=\int_0^{\ell_j}\,\sqrt{M'_1(x)}\,\dd x>0$, and with transfer function  $g_j$,   $j=1,2$. If $0<a<\min \{a_1,a_2\}$ and
\[
l(a):=\inf\left\{l:\int_0^{l}\sqrt{M'_1(x)}\,\dd x=a\right\},
\]
then 
\[
g_1\big|_{[0,2a]}=g_2\big|_{[0,2a]}\ \Longleftrightarrow\ M_1\big|_{[0,l(a)]}= M_2\big|_{[0,l(a)]}.
\]

}
\end{itemize}

\medskip

If the string $S[\ell,M]$ has a concentrated mass at $x=0\!:\,M(0)>0$, then the spectral measures $\tau$ of the string are finite and in ${4.5^{\rm o}}$ the transfer function  $g_\tau$ can be replaced by 
\begin{equation}\nonumber
 f_\tau(t)=\int_0^\infty\cos\big(\sqrt{\la}\, t\big)\dd\tau(\la),\quad z\in\RR.
\end{equation}

The characteristic property of a continuous function $g$ (or $f$) to have a representation
\[
g(t)=\!\!\int_0^\infty\!\dfrac{\cos\big(\sqrt{\la}\, t\big)-1}{\la}\,\dd\tau(\la)\quad \!\left(\!\text{ or } f(t)=\int_0^\infty\!\cos\big(\sqrt{\la}\, t\big)\dd\tau(\la)\!\right),\quad t\in\RR,
\]
with a measure $\tau$ such that $\int_0^\infty\frac{\dd\tau(\la)}{1+\la}<\infty$ $\big($or $\int_0^\infty\dd\tau(\la)<\infty\big)$ is that  the kernel $G_g$ from \eqref{kerG} is positive definite and $g$ is real (or  the kernel  $F_f$ from \eqref{efef} is positive definite and $f$ is real).

\section{Appendix}\label{app}
\noindent{5.1.} Proof of statement $2.2^{\rm o}$. The solutions $\varphi(x;\la)$ and the functions $\cos(\sqrt{\la}x)$ in Section 2 are connected by Volterra integral equations
\begin{align}\label{lev1}
\varphi(x,\la)&=\cos(\sqrt{\la}x)+\int_0^xK(x,\xi)\cos(\sqrt{\la}t)\dd \xi,\quad 0\le x<\ell,\\\label{lev2}
\cos(\sqrt{\la}x)&=\varphi(x,\la)-\int_0^xK_1(x,\xi)\varphi(\xi,\la)\dd \xi\quad 0\le x<\ell,
\end{align}
with kernels $K(x,\xi),\,K_1(x,\xi)$, see \cite{LG}, and also  \cite[Section IV.11]{L-shift}, \cite{M}, \cite{FY};  if $q$ has $m$ locally summable derivatives then $K$ has in both variables $m+1$ locally summable derivatives. Integrating \eqref{lev2} with respect to $x$ from $0$ to $a<\ell$ we find
\[
\frac{\sin(\sqrt{\la}a)}{\sqrt{\la}}=\int_0^a\varphi(x,\la)\left[1-\int_x^aK_1(\xi,x)\dd\xi\right]\dd x.
\]
 Hence the function $\frac{\sin(\sqrt{\la}a)}{\sqrt{\la}}$ is the Fourier transformation of the function
\[
K_2(a,x)=\left\{\begin{array}{cl}1-\displaystyle\int_a^x K_1(\xi,x)\dd\xi,\quad&0\le x\le a,\\[2mm]
0&x>a,
\end{array}\right.
\]
 Parseval's relation implies for an arbitrary $\tau\in\mathcal S_\ell$
\[
2\int_\mathbb R \left(\frac{\sin(\sqrt{\la}a)}{\sqrt{\la}}\right)^2\dd \tau( \la)=2\int_0^aK_2(a,x)^2\dd x,
\]
or
\[
\int_\mathbb R\frac{1-\cos(2\sqrt{\la}a)}{\la}\dd\tau(\la)=2\int_0^aK_2(a,x)^2\dd x.
\]
Since the integral on the right hand side is finite for all $a<\ell$ and independent of $\tau\in\mathcal S_\ell$ ,  the function $\Phi_\tau(t)$ is well defined and independent of $\tau\in\mathcal S_\ell$ for $0\le t<2\ell$. This relation does also imply that $\Phi_\tau(t)$ for $0\le t\le 2a$ is independent of $\tau\in\mathcal S_a$, and statement ${2.2^{\rm o}}$ is proved.

\medskip

\noindent{5.2.} In this subsection we outline the application of the method of directing functionals (see \cite{Knuclei}, \cite{KUkr}, \cite{Lrf}) to the kernel $K_\Phi$ and indicate a proof of statement ${2.3^{\rm o}}$.

Consider a continuous function $\Phi$ on $(0,2\ell)$ with the property that the kernel
\[
K_\Phi(s,t)=\frac 12\big(\Phi(t+s)-\Phi(|t-s|)\big),\quad 0\le s,t<\ell,
\]
 is positive definite. By $\mathcal L_\Phi$ we denote the Hilbert space which is obtained if the space $C_0([0,\ell))$ of continuous functions on $[0,\ell)$, which vanish identically near $\ell$, is equipped with the inner product
\begin{equation}\label{ip}
[u,v]_\Phi:=\int_0^\ell\int_0^\ell K_\Phi(s,t)u(s)\ov{v(t)}\dd s\dd t,\quad u,v\in C_0([0,\ell)),
\end{equation} 
and  factored and completed in a canonical way.

The operator
\begin{equation}\label{ww}
 B_0:\ B_0u:=\dfrac{\dd ^2u(t)}{\dd t^2},\ t\in[0,\ell),
\end{equation}
 where 
\[
u\in{\rm{dom}}\,B_0:=\{u\in C^2([0,\ell)), u(0)=0, \,u \text{ vanishes identically near } \ell\},
\] 
is symmetric with respect to the inner product \eqref{ip} and hence  generates a closed symmetric operator $B$ in $\mathcal L_\Phi$. A directing functional of $B_0$ is
\[
 \mathcal G(u;\la):=\displaystyle\int_0^\ell\,u(t)\dfrac{\sin(\sqrt{\la}t)}{\sqrt{\la}}\dd t,\quad u\in C_0([0,\ell)).
\]
 Having one directing functional, the defect numbers of $B$ are zero or one, and it is easy to see that they are equal.
The method of directing functionals yields the existence   of a unique or of infinitely many {\it spectral measures $\tau$ of the operator $A$}. This means that for $\tau$ there holds Parseval's relation
\begin{equation}\label{parrf}
[u,u]_\Phi=\int_0^\ell|\mathcal G(u;\la)|^2\dd \tau(\la),\quad u\in\mathcal L_\Phi,
\end{equation}
which implies 
\begin{align}\label{end}
\frac 12\big(\Phi(t\!+\!s)-\Phi(|t\!-\!s|)\big)&=\int_\RR\frac{\sin(\sqrt{\la}t)}{\sqrt{\la}}\frac{\sin(\sqrt{\la}s)}{\sqrt{\la}}\dd\tau(\la),\quad 0\le s,t<\ell.
\end{align}
The relation \eqref{parrf} means that the directing functional $\mathcal G(u;\cdot)$ defines an isometry from $\mathcal L_\Phi$
into $L^2_\tau$. The spectral measure $\tau$ is called {\it orthogonal} if this isometry is onto, and otherwise {\it non-orthogonal}.The set of all spectral measures of $B$ is denoted by $\mathcal T_\ell$, and $\mathcal T_\ell^{\rm orth}$ denotes its subset of orthogonal spectral measures.

It follows from \eqref{end} that
\begin{align*}
\Phi(2t)-\Phi(0)&=2\int_\RR\frac{(\sin(\sqrt{\la}t))^2}\la\,\dd\tau(\la)\\
&= \int_\RR \frac{1-\cos(2\sqrt{\la}t)}{\la}\,\dd\tau(\la),\quad 0\le t<\ell,
\end{align*}
and, since $\Phi(0)=0$,
\begin{equation}\label{stock}
\Phi(t)=\int_\mathbb R\dfrac{1-\cos(\sqrt{\la}t)}{\la}\dd \tau(\la),\quad 0\le t<2\ell.
\end{equation}

In the method of directing functionals it is shown that the set of all spec\-tral measures of $B$ is in a bijective correspondence with the set of all self-adjoint extensions of $B$. In fact, a spectral measure is  orthogonal if the corresponding self-adjoint extension of $B$ acts in $\mathcal L_\Phi$, and it is non-orthogonal  if the corresponding self-adjoint extension  acts in a properly larger space than~$\mathcal L_\Phi$.

Hence there is a bijective correspondence between all self-adjoint extensions of $B$ in $\mathcal L_\Phi$ or in a larger Hilbert space, and all  representations of $\Phi$ in the form \eqref{stock}. 

Now let $0<a<\ell$ and consider the corresponding sets $\mathcal T_a$ and $\mathcal T_a^{\rm orth}$  for the restriction of $\Phi$ to $[0,2a]$. Clearly, from these definitions, 
\[
\mathcal S_a\subset \mathcal T_a.
\]
 and statement $2.3^{\rm o}$ can be formulated as follows:
\begin{equation}\label{st}
 \mathcal S_a=\mathcal T_a.
\end{equation}

 We prove the corresponding relation  for the orthogonal spectral measures:
\begin{equation}\label{stock1}
\mathcal S_a^{\rm orth}=\mathcal T_a^{\rm orth}.
\end{equation}
To this end we first show that
\begin{equation}
\mathcal S_a^{\rm orth}\subset\mathcal T_a^{\rm orth}.
\end{equation}

Consider $\tau\in\mathcal S_a^{\rm orth}$. If $\tau$ would not be in $\mathcal T_a^{\rm orth}$ there would exist an $h\in L^2_\tau, h\ne 0$, such that $h\perp \dfrac{\sin(\sqrt{\la}x)}{\sqrt{\la}},\ x\in [0,a)$. Now we observe, that the relation \eqref{lev1}
implies
\begin{align*}
\int_0^x&\varphi(\xi,\la)\dd \xi =\dfrac{\sin(\sqrt{\la}x)}{\sqrt{\la}}+\int_0^x\int_0^\xi K(\xi,t)\cos(\sqrt{\la}t)\dd t\dd \xi\\
=&\dfrac{\sin(\sqrt{\la}x)}{\sqrt{\la}}+\int_0^x\left(K(t,t)-\int_t^xK_t(\xi,t)\dd \xi\right)\dfrac{\sin(\sqrt{\la}t)}{\sqrt{\la}}\dd t,\quad 0\le x\le a.
\end{align*}
It follows that also $h\perp \int_\Delta \varphi(x;\la)\dd x$ for all intervals $\Delta\subset[0,\ell)$ in $L^2_\tau$, hence $\tau\notin\mathcal S_a^{\rm orth}$, a contradiction.


As is well-known (it follows e.g. from M.G.~Krein's resolvent formula), for any given real number $\la $, there is exactly one orthogonal spectral measure in $\mathcal S_a$ which has $\la$ in its support, and the same holds for $\mathcal T_a$. Therefore the two sets of orthogonal spectral measures coincide and \eqref{stock1} is proved. 

For a proof of \eqref{st} we observe that according to \eqref{bloch} the set $\mathcal S_a$ is given through a fractional linear relation 
\begin{equation}\label{f1}
\int_\mathbb R\frac{\dd\tau(\la)}{\la-z}=\frac{a_{11}(z)\ga(z)+a_{12}(z)}{a_{21}(z)\ga(z)+a_{22}(z)};
\end{equation} 
for the right hand side we write for short $W_\mathcal A(\gamma(z)),\ \mathcal A(z):=(a_{ij})_{i,j=1}^2$. The theory of resolvent matrices (see, e.g. \cite{KLII}) yields a similar representation for the set $\mathcal T_a$:
\begin{equation}\label{f2}
\int_\mathbb R\frac{\dd\tau(\la)}{\la-z}=\frac{b_{11}(z)\ga(z)+b_{12}(z)}{b_{21}(z)\ga(z)+b_{22}(z)}=W_{\mathcal B(z)}(\gamma(z)).
\end{equation} 
The relation \eqref{stock1} implies that for each $\ga\in\mathbb R\cup\{\infty\}$ there exists a $\wh\ga\in\mathbb R\cup\{\infty\}$ such that $\mathcal W_{\mathcal  A(z)}(\ga)=\mathcal W_{\mathcal  B(z)}(\wh\ga)$, and this mapping is a bijection in $\mathbb R\cup\{\infty\}$:
\[
\mathcal W_{\mathcal  B(z)^{-1}}\big(\mathcal W_{\mathcal  A(z)}(\ga)\big)=\mathcal W_{\mathcal  B(z)^{-1}\mathcal  A(z)}(\ga)=\wh\ga.
\]
Hence $\mathcal  B(z)^{-1}\mathcal  A(z)=a(z)C$ with some scalar function $a(z)$ and a constant $J$-unitary matrix C, and \eqref{stock1} follows easily.

\medskip
 
\noindent{5.3.} If $\wt A$ denotes the self-adjoint extension of $A$ which corresponds to $\tau$, the left hand side in \eqref{f1} can be written, at least formally, as $((\wt A-z)^{-1}\delta_0,\delta_0)$; similarly, if $\wt B$ denotes the self-adjoint extension of $B$ corresponding to $\tau$  the left hand side in \eqref{f2} becomes $[(\wt B-z)^{-1}\delta'_0,\delta'_0]_\Phi$. Here $\delta_0$ and $\delta_0'$ are to be considered as generalized elements of $L^2(0,a)$ and $\mathcal L_\Phi$, respectively.    

A consequence of the relation \eqref{stock1} is the following statement: The Sturm-Liouville operator $\wt A$ in $L^2(0,a)$, given by \eqref{hc} with constant $\ga$, is unitarily equivalent to a self-adjoint extension $\wt B$ of the closure $B$ of $B_0$ from \eqref{ww} in $\mathcal L_\Phi(0,a)$, in fact, both operators are unitarily equivalent to the operator of multiplication by the independent variable $\la$ in $L^2_\tau(\mathbb R)$:
\[
\begin{array}{ccccc}
\wt A \text{ in } L^2(0,a)\!\!&\!\!\xlongrightarrow[]{\int_0^ay(x)\varphi(x;\la)\dd x}\!\!&\!\!\la\!\cdot\!\! \text{ in } L_\tau^2(\mathbb R)\!\!&\!\!\xlongleftarrow[]{\int_0^a u(t)\frac{\sin(\sqrt{\la}t)}{\sqrt{\la}}\dd t}\!\!&\!\!\wt B \text{ in } \mathcal L_\Phi(0,a);
\end{array}
\]
The unitary equivalence is realized through the two Fourier transformations or their inverses.
Here $\wt A$ is the operator given by 
\[
-\dfrac{\dd^2y}{\dd x^2}+qy,\quad
 y'(0)-   h\, y(0)=0,\  y'(a)-\ga y(a)=0,
\]
 and $\wt B$ is a self-adjoint extension of 
\[
\frac{\dd^2u}{\dd t^2},\quad u(0)=0,\ u \text{ vanishing near }a. 
\]
This is in analogy to the Hamburger moment problem, where $\wt A$ corresponds to the operator generated by the Jacobi matrix
 in $l^2(\mathbb N_0)$ and $\wt B$ to the shift operator in the Hilbert space generated by the positive definite kernel  $k(m,n):=s_{m+n},\ m,n=0,1,\dots$, where $(s_n)_0^\infty$ is the moment sequence. 

A corresponding remark holds also for the transfer functions of the canonical systems and strings in Section 4.


\end{document}